\theoremstyle{plain}
\newtheorem{theorem}{Theorem}
\newtheorem{lemma}{Lemma}
\theoremstyle{definition}
\begin{document}

\title[real rank]{On the typical rank of real bivariate
polynomials}
\author{Edoardo Ballico}
\address{Dept. of Mathematics\\
  University of Trento\\
38123 Povo (TN), Italy}
\email{ballico@science.unitn.it}
\thanks{The author was partially supported by MIUR and GNSAGA of 
INdAM (Italy).}
\subjclass{14N05; 15A69}
\keywords{typical rank; real bivariate polynomial; symmetric tensor rank; bivariate homogeneous polynomial}

\begin{abstract}
Here we study the typical rank for real bivariate homogeneous polynomials of degree $d\ge 6$
(the case $d\le 5$ being settled by P. Comon and G. Ottaviani).
We prove that $d-1$ is a typical rank and that if $d$ is odd, then $(d+3)/2$ is a typical rank.
\end{abstract}

\maketitle

\section{Introduction}\label{S1}

For any integer $d\ge 0$ let $\mathbb {K}[x,y]_d$, $\mathbb {K}$ either $\mathbb {C}$ or $\mathbb {R}$, denote the $(d+1)$-dimensional $\mathbb {K}$-vector space of all degree $d$ bivariate homogeneous polynomials. For any $f\in \mathbb {R}[x,y]_d$ (resp. $f \in \mathbb {C}[x,y]_d$) let
$Rsr (f)$ (resp. $Csr (f))$ denote the minimal integer $r$ such that $f = \sum _{i=1}^{d} c_iL_i^d$
with $c_i\in \mathbb {R}$ and $L_i\in \mathbb {R}[x,y]_1$ (resp. $c_i\in \mathbb {C}$ and $L_i\in \mathbb {C}[x,y]_1$). The positive integer $Rsr (f)$ (resp. $Csr (f)$) is called the real (resp.
complex) rank of $f$. If $ f\in \mathbb {R}[x,y]_d$, then both $Rsr (f)$ and $Csr (f)$ are defined.
Obviously $Rsr (f) \le Csr (f)$. Quite often strict inequality holds (see e.g. \cite{co} and references
therein). The computation of the integer $Rsr (f)$ is used in real-life applications (\cite{bcmt},\cite{l} and the introductions
of \cite{bgi}, \cite{lt}, \cite{t}). However,
in many cases coming from Engineering the coefficients of $f$ are known only approximatively.
Unfortunately, $Rrs (f)$ is neither upper semicontinuous not lower semicontinuous.
Over $\mathbb {C}$ it is known the existence of a non-empty Zariski open subset $\mathcal {U}$
of $\mathbb {C}[x,y]_d\setminus \{0\}$ such that $Csr (f) = \lfloor (d+2)/2\rfloor $ for every
$f\in \mathcal {U}$ (\cite{co}, \cite{ik}, \S I.3).  We recall that Zariski open implies that $\mathcal {U}$ is dense
in $\mathbb {C}[x,y]_d$ for the euclidean topology and that $\mathbb {C}[x,y]_d\setminus \mathcal {U}$ is a union of finitely many differentiable manifolds
with real codimension at least $2$. In the complex case much more is known, even for
$f\in \mathbb {C}[x,y]_d\setminus \mathcal {U}$ (\cite{ik}, \S 1.3, \cite{l}, 9.2.2, \cite{cs}, \cite{bgi}, \S 3, \cite{lt}, Theorem 4.1). In the real case the picture is more complicated, because
$\mathcal {U}\cap \mathbb {R}[x,y]_d$ may have several connected components. An integer $t>0$
is called a {\it typical rank} in degree $d$ if there is a non-empty open subset $V$ (for the euclidean topology) of the real vector space $\mathbb {R}[x,y]_d$ such that $Rsr (f)=t$ for all $f\in V$. The existence of $\mathcal {U}\subset \mathbb {C}[x,y]_d$ such that $Csr (g) =\lfloor (d+2)/2\rfloor$ for all $g\in \mathcal {U}$
implies that any typical rank is at least $\lfloor (d+2)/2\rfloor$. If $t$ is a typical rank in degree $d$, then $t\le d$ (\cite{co}, Proposition 2.1). It is well-known that $\lfloor (d+2)/2\rfloor$
and $d$ are typical ranks and there is a clear description of a euclidean open subset of
$\mathbb {R}[x,y]_d$ parametrizing polynomials with real rank $d$: the set of all polynomials
with $d$ distinct real roots (\cite{co}, Proposition 3.4, \cite{cr}, Corollary 1). P. Comon and G. Ottaviani found all typical ranks
for $d\le 5$ and conjectured that all integers $t$ such that $\lfloor (d+2)/2\rfloor \le t \le d$
are typical ranks for real bivariate forms of degree $d$. 

In this note we prove the following results.  

\begin{theorem}\label{i7}
For each $d\ge 5$ the integer $d-1$ is a typical rank for real bivariate degree $d$ forms.
\end{theorem}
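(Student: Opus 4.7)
I will show that a specific polynomial $f_0\in\mathbb{R}[x,y]_d$ has real rank exactly $d-1$, and that this rank propagates to a Euclidean open neighbourhood of $f_0$. Guided by the analysis of Comon--Ottaviani for $d\le 5$, take
$$f_0 \;=\; (x^2+y^2)\prod_{i=1}^{d-2}(y-a_ix),$$
with $a_1,\dots,a_{d-2}$ distinct real numbers in general position; then $f_0$ lies in the Euclidean open subset $\Sigma_{d-2}\subset\mathbb{R}[x,y]_d$ of forms with $d-2$ simple real roots and one pair of complex conjugate roots. Both bounds $Rsr(f_0)\le d-1$ and $Rsr(f_0)\ge d-1$ will be established through the real Sylvester--Gundelfinger criterion, according to which $Rsr(f)$ equals the smallest $r$ for which the apolar space $(f^\perp)_r$ contains a real form with $r$ distinct real roots.

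For the upper bound, I construct an explicit element $h\in(f_0^\perp)_{d-1}$ with $d-1$ distinct real roots. The space $(f_0^\perp)_{d-1}$ has expected dimension $d-2$, parametrised by multiplying the generators of $(x^2+y^2)^\perp=\langle xy,\,x^2-y^2\rangle$ by degree-$(d-3)$ forms vanishing on suitable subsets of $\{(1:a_i)\}$. A discriminant computation shows that a generic real combination in this family has $d-1$ distinct real roots. The case $d=4$ illustrates the mechanism: for $f_0=(x^2+y^2)(y-x)(y-2x)$ one finds the cubic $x(5x^2+18xy+7y^2)\in(f_0^\perp)_3$, whose quadratic factor has positive discriminant, yielding three distinct real roots.

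For the lower bound, I must rule out that any real $g\in(f_0^\perp)_r$ splits into $r$ distinct real linear factors, for each $r$ with $\lceil(d+1)/2\rceil\le r\le d-2$. The positive-definite factor $x^2+y^2$ in $f_0$ forces any such $g$ to satisfy a strict sign condition incompatible with full real splitting; equivalently, $g$ must carry a pair of complex conjugate roots ``inherited'' from $(x^2+y^2)^\perp$. The generic choice of the $a_i$ keeps the argument clean, and the most delicate case is $r=d-2$, where the obstruction reduces to a univariate discriminant inequality analogous to the one showing that $x^2y^{d-2}$ has no apolar form of degree $d-1$ with $d-1$ distinct real roots.

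Both conditions---the upper bound (``some element of $(f^\perp)_{d-1}$ has $d-1$ distinct real roots'') and the lower bound (in its dual form as strict positivity of the sign/discriminant obstructions for each $r\le d-2$)---are Euclidean open in $f$, so they persist throughout a Euclidean neighbourhood $V$ of $f_0$, on which $Rsr(f)=d-1$. The main obstacle is the degree-by-degree verification of the lower bound; this mirrors and extends the explicit casework Comon--Ottaviani carried out for $d\le 5$.
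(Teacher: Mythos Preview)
Your proposal has a genuine gap: the lower bound $Rsr(f_0)\ge d-1$ is not proved, only sketched via a heuristic that does not obviously work. You assert that the factor $x^2+y^2$ ``forces any such $g$ to satisfy a strict sign condition incompatible with full real splitting'' and that $g$ ``must carry a pair of complex conjugate roots inherited from $(x^2+y^2)^\perp$,'' but apolarity does not transport roots in this way; the apolar ideal of a product is not generated by the apolar ideals of the factors, and there is no evident mechanism forcing every element of $(f_0^\perp)_r$ (a linear space of dimension $2r-d$) to miss the full-dimensional cone of totally real degree-$r$ forms. You yourself flag this as ``the main obstacle'' and propose to handle it by casework in $r$ analogous to what Comon--Ottaviani did for $d\le 5$; but that casework is exactly what a proof for general $d$ must avoid, and nothing in your outline indicates how to carry it out uniformly. (Your illustrative computation is for $d=4$, outside the range of the theorem and already settled.)

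By contrast, the paper's argument sidesteps apolarity entirely. For the lower bound it takes a form of the shape $g=\sum_{i=1}^{d-1}c_iL_i^d+cR^d$ and projects $\mathbb{P}^d$ from the point $\nu_d(R)$ on the rational normal curve; the image of $g$ is then $\sum c_iL_i^{d-1}$, a degree $d-1$ form with $d-1$ distinct real roots, whose real rank is $d-1$ by the known maximal-rank result in degree $d-1$. Any putative decomposition of $g$ of length $\le d-2$ would project to one of length $\le d-2$ for the image, a contradiction. For the upper bound the paper arranges (by pushing $|c|$ large) that $g$ has distinct roots not all real, whence $Rsr(g)\le d-1$ by the characterisation of rank-$d$ forms among forms with distinct roots. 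Both steps are uniform in $d$ and require no degree-by-degree verification. If you want to salvage your approach, you would need a structural argument (not casework) showing that a form with exactly $d-2$ real roots cannot be apolar to a totally real form of any degree $\le d-2$; absent that, the projection method is both simpler and complete.
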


\begin{theorem}\label{i0}
Fix an odd integer $d=2m+1\ge 5$. Then $m+1$, $m+2$ and $2m+1$ are typical ranks for
real degree $d$ bivariate forms.
\end{theorem}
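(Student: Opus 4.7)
The typicality of $m+1 = \lfloor (d+2)/2\rfloor$ and of $2m+1 = d$ is already recorded in the introduction, so the substantive content of Theorem \ref{i0} is that $m+2 = (d+3)/2$ is a typical rank. The plan is to exploit the real version of Sylvester's theorem: for $f \in \mathbb{R}[x,y]_{2m+1}$ of generic Hilbert function the apolar ideal $f^\perp$ is a Gorenstein complete intersection with a single generator $g_1(f)$ of degree $m+1$ spanning $(f^\perp)_{m+1}$ and a single generator $g_2(f)$ of degree $m+2$, and $Rsr(f)\le r$ iff $(f^\perp)_r$ contains a real form with $r$ distinct real roots.

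Let $W$ be the euclidean open locus on which $g_1(f)$ has at least one pair of non-real complex conjugate roots. On $W$, the only candidate in $(f^\perp)_{m+1}$ (up to scalar) is $g_1(f)$ itself, so $Rsr(f) \ge m+2$ throughout. It therefore suffices to exhibit a non-empty euclidean open subset $V \subseteq W$ on which the projectivized pencil $\mathbb{P}\bigl(g_1(f)\cdot \mathbb{R}[x,y]_1 \oplus \mathbb{R}\, g_2(f)\bigr)$ of apolar $(m+2)$-forms meets the open stratum of degree-$(m+2)$ real forms with $m+2$ distinct real roots; this forces $Rsr(f) \le m+2$, and hence $Rsr \equiv m+2$ on $V$, since the three defining conditions (generic Hilbert function, $f \in W$, pencil meets the all-real-rooted stratum) are all euclidean open.

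To produce such a $V$, I would fix a specific $g_1^\ast = (x^2+y^2)\cdot h \in \mathbb{R}[x,y]_{m+1}$ with $h$ a product of $m-1$ well-separated simple real linear factors, parametrize the affine slice $\Sigma(g_1^\ast) \subseteq \mathbb{R}[x,y]_d$ of forms apolar to $g_1^\ast$, and argue that on a non-empty open subset of $\Sigma(g_1^\ast)$ the associated $g_2(f)$ is in sufficiently general position that for some $(a,b,c) \in \mathbb{R}^3$ the combination $g_1^\ast(ax+by) + c\, g_2(f)$ has $m+2$ distinct real roots. The heuristic is that a small $c \neq 0$ deforms the unique complex conjugate pair of $g_1^\ast$ into a nearby real pair, while only slightly displacing the $m$ real roots inherited from $g_1^\ast(ax+by)$.

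The principal obstacle is precisely this geometric step: showing that the real $2$-plane of apolar $(m+2)$-forms — which contains the line $g_1^\ast \cdot \mathbb{R}[x,y]_1$, every member of which has a complex conjugate pair — crosses the real discriminant in $\mathbb{P}\mathbb{R}[x,y]_{m+2}$ into the open chamber of all-real-rooted forms. I would settle it by an explicit construction of $g_2$ and an intermediate-value argument along a $1$-parameter curve in the pencil, exploiting the spread of the real roots of $h$ and the localization of the perturbation near the complex pair to verify that the required sign change of the real discriminant actually occurs and produces simple real roots rather than partial coalescences. The remaining bookkeeping — continuous dependence of $g_1(f)$ and $g_2(f)$ on $f$ over the open locus of generic Hilbert function, and openness of the real-root conditions — is routine.
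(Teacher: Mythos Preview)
Your framework via apolarity is the natural dual of the paper's geometric argument on the rational normal curve, and the two line up almost perfectly: the paper's complex-conjugation-stable set $W\subset C_d(\mathbb{C})$ with $\sharp(W)=m+1$ and a non-real point is exactly the zero locus of your $g_1(f)$, and the paper's all-real set $S\subset C_d(\mathbb{R})$ with $\sharp(S)=m+2$ is exactly the zero locus of a totally real element of $(f^\perp)_{m+2}$. Your identification of the crux --- producing an all-real form in the degree-$(m+2)$ apolar pencil --- is correct.

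Where you go astray is in how you propose to produce that all-real element. The heuristic ``a small $c\neq 0$ deforms the unique complex conjugate pair of $g_1^\ast$ into a nearby real pair'' is false: the non-real roots of $(x^2+y^2)h$ sit at a fixed positive distance from the real axis, and a small perturbation by $c\,g_2$ moves them only slightly, so they remain non-real. To force them onto the real axis you must push $c$ past a discriminant crossing, and then you have no control over what the other $m$ roots are doing; your intermediate-value sketch does not supply that control. So as written the key step is a genuine gap.

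The paper closes this gap in the simplest possible way, and you can too within your own language: since for coprime $g_1,g_2$ with $\deg g_1=m+1$, $\deg g_2=m+2$ the ideal $(g_1,g_2)$ is the apolar ideal of a unique $f\in\mathbb{P}(\mathbb{R}[x,y]_d)$, you may simply \emph{choose} $g_2$ to be totally real with $m+2$ distinct roots disjoint from those of $g_1^\ast$. Then $g_2$ itself is an all-real element of $(f^\perp)_{m+2}$, so $Rsr(f)\le m+2$, while $g_1^\ast$ is the unique (up to scalar) element of $(f^\perp)_{m+1}$ and is not totally real, so $Rsr(f)\ge m+2$. In the paper's formulation this is precisely the construction of $Q=\langle W\rangle\cap\langle S\rangle$. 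No perturbation or discriminant analysis is needed.

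One point where your write-up is actually cleaner than the paper's: to pass from a single such $f$ to a euclidean open set, you invoke openness of the three conditions (generic Hilbert function, $g_1(f)$ not totally real, $(f^\perp)_{m+2}$ meets the totally-real stratum). That is correct and more transparent than the paper's dimension count on the family $\mathcal{R}(O)$.
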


It is well-known that $\lfloor (d+2)/2\rfloor$ is always a typical rank. This observation, \cite{co} and
Theorems \ref{i7}, \ref{i0} prove Comon-Ottaviani conjecture if $d\le 7$, that for any even $d \ge 6$
there are at least $3$ typical ranks and that for every odd $d\ge 7$ there are at least $4$ typical ranks.

\section{The proofs}

For any $f\in \mathbb {R}[x,y]_d\setminus \{0\}$ and any $c\in \mathbb {R}\setminus \{0\}$ we
have $Rsr (f) = Rsr (cf)$. Hence the question about the real rank is a question
concerning polynomials, up to a non-zero scalar multiple. Hence we work
with the projective space $\mathbb {P}(\mathbb {R}[x,y]_d) \cong \mathbb {P}^d(\mathbb {R})$. Let $\nu _d: \mathbb {P}^1(\mathbb {C})\to \mathbb {P}^d(\mathbb {C})$ denote the degree $d$ Veronese embedding defined over $\mathbb {R}$. Set $C_r(\mathbb {C}) =\nu _d(\mathbb {P}^1(\mathbb {C}))$ (the degree $d$ rational normal curve) and $C_r(\mathbb {R}):= \nu _d(\mathbb {P}^1(\mathbb {R}))$.
Abusing notations, let $\sigma : \mathbb {P}^1(\mathbb {C})\to \mathbb {P}^1(\mathbb {C})$ and $\sigma : \mathbb {P}^d(\mathbb {C})\to \mathbb {P}^d(\mathbb {C})$ denote the complex conjugation.
We have $\sigma \circ \nu _d = \nu _d\circ \sigma$, $\mathbb {P}^1(\mathbb {R}) =\{P\in \mathbb {P}^1(\mathbb {C}):\sigma (P) =P\}$ and $\mathbb {P}^d(\mathbb {R}) =\{P\in \mathbb {P}^d(\mathbb {C}):\sigma (P) =P\}$.
Fix $f\in \mathbb {C}[x,y]\setminus \{0\}$ and call $P\in \mathbb {P}^d(\mathbb {C})$ the
associated point. We have $Csr (f)=t$ if and only if
$t$ is the minimal cardinality of a finite set $S\subset C_r(\mathbb {C})$ such
that $P\in \langle S\rangle$, where $\langle \ \ \rangle$ denote the linear span with complex coefficients.
If $f$ has real coefficients, then $Rsr (f)$ is the minimal cardinality of a finite set $S\subset C_r(\mathbb {R})$ such
that $P\in \langle S\rangle _{\mathbb {R}}$, where $\langle \ \ \rangle _{\mathbb {R}}$ denote the linear span with real coefficients. Notice that in the last definition
we may take $\langle S\rangle$ instead
of  $\langle S\rangle _{\mathbb {R}}$, because $\langle A\rangle \cap \mathbb {P}^d(\mathbb {R})
= \langle A\rangle  _{\mathbb {R}}$ for any finite set $A\subset \mathbb {P}^d(\mathbb {R})$.

\begin{lemma}\label{e1}
Fix $P\in \mathbb {P}^r(\mathbb {C})$ and assume the existence of finite sets $A, B\subset C_r(\mathbb {C})$ such that $P\in \langle A\rangle \cap \langle B\rangle$, $P\notin \langle A'\rangle $
for any $A'\subsetneq A$ and $P\notin \langle B'\rangle $
for any $B'\subsetneq B$. Then either $A=B$ or $\sharp (A\cup B) \ge r+2$.
\end{lemma}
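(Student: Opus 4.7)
The plan is a direct contradiction argument, driven by the classical fact that any $k \le r+1$ distinct points of the degree $r$ rational normal curve $C_r(\mathbb {C}) \subset \mathbb {P}^r(\mathbb {C})$ are linearly independent (equivalently, any Vandermonde-type matrix of size $k\times (r+1)$ in distinct parameters has full rank).

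First I would extract the basic content of the minimality hypothesis. If $P = \sum_{a \in A} \alpha_a\, a$ is any expression of $P$ as a linear combination of the points of $A$, then every coefficient $\alpha_a$ must be non-zero, for a vanishing $\alpha_{a_0}$ would place $P$ in $\langle A\setminus \{a_0\}\rangle$, contradicting the minimality of $A$. The same applies to the coefficients $\beta_b$ in any expression $P = \sum_{b \in B} \beta_b\, b$.

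Now assume for contradiction that $A \ne B$ and $\sharp (A\cup B) \le r+1$. Since $A \cup B$ is a set of at most $r+1$ distinct points on $C_r(\mathbb {C})$, it is linearly independent in $\mathbb {P}^r(\mathbb {C})$. Subtracting the two representations of $P$ yields
\[
\sum_{c\in A\setminus B} \alpha_c\, c \;+\; \sum_{c\in A\cap B} (\alpha_c - \beta_c)\, c \;-\; \sum_{c\in B\setminus A} \beta_c\, c \;=\; 0,
\]
and linear independence of $A\cup B$ forces every coefficient to vanish. In particular $\alpha_c = 0$ for each $c \in A\setminus B$, which by the minimality observation is possible only when $A\setminus B = \emptyset$; symmetrically $B\setminus A = \emptyset$, and so $A=B$, contrary to assumption.

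I do not foresee a real obstacle: the only subtle point is to invoke minimality in the form ``no coefficient of $P$ against $A$ can vanish in any expansion,'' which is immediate from the definition. Everything else is linear algebra on the rational normal curve, and the Vandermonde fact is exactly what pins down the threshold $r+2$ in the conclusion.
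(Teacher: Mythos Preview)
Your argument is correct. It is, however, genuinely different from the paper's proof. The paper does not manipulate coefficients at all: assuming $A\ne B$, it invokes a lemma of Ballico--Bernardi to conclude $h^1(C_r(\mathbb{C}),\mathcal{I}_{A\cup B}(1))>0$, and then observes that under the identification $C_r(\mathbb{C})\cong \mathbb{P}^1(\mathbb{C})$ the sheaf $\mathcal{I}_{A\cup B}(1)$ is a line bundle of degree $r-\sharp(A\cup B)$, whose $h^1$ vanishes as soon as this degree is $\ge -1$; hence $\sharp(A\cup B)\ge r+2$. Your route is more elementary and entirely self-contained: it uses only the Vandermonde fact that $\le r+1$ points of $C_r(\mathbb{C})$ are linearly independent, together with the observation that minimality forces every coefficient in a decomposition of $P$ to be non-zero. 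The paper's cohomological argument, by contrast, is shorter once the cited lemma is granted and transports verbatim to curves other than the rational normal curve (where the threshold $r+2$ would be replaced by the appropriate bound coming from $h^1$). One small point worth making explicit in your write-up is that the two expressions $\sum_{a\in A}\alpha_a\,a$ and $\sum_{b\in B}\beta_b\,b$ must be taken with respect to a single fixed lift of $P$ (and consistent lifts of the points of $A\cap B$) before subtraction; this is routine, but since you are working projectively it deserves a word.
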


\begin{proof}
Assume $A\ne B$. By \cite{bb}, Lemma 1, we have $h^1(C_r(\mathbb {C}),\mathcal {I}_{A\cup B}(1))>0$.
Since $C_r(\mathbb {C})$ is a degree $r$ rational normal curve and $h^1(\mathbb {P}^1(\mathbb {C}),R)=0$ for
every line bundle $R$ on $\mathbb {P}^1(\mathbb {C})$ with degree $\ge -1$, we get
$\sharp (A\cup B) \ge \deg (C_r(\mathbb {C}))+2=r+2$.
\end{proof}

\vspace{0.3cm}

\qquad {\emph {Proof of Theorem \ref{i0}}} If $d=5$ the result is true by \cite{co}, ManTheorem (ii); P. Comon and G. Ottaviani gave an explicit determination of the integer $Rsr (f)$ for a sufficiently general $f$ (\cite{co}, \S 4). For any $d\ge 3$ the integer $d$ is a typical rank  for $\mathbb {R}[x,y]_d$ and it is associated to $f\in \mathbb {R}[x,y]_d$
with $d$ distinct real roots (\cite{co}, Proposition 3.1, \cite{cr}, Corollary 1). 

\quad (a) It is well-known that $m+1$ is a typical rank. We may check this observation in the following way. Fix any $S\subset C_d(\mathbb {R})$ such that
$\sharp (S) =m+1$. Since $C_d$ is a rational normal curve, $S$ is linearly independent, i.e.
$\dim (\langle S\rangle )=m$.
Fix any $P\in \langle S\rangle _{\mathbb {R}}$ such that $P\notin \langle S'\rangle _{\mathbb {R}}$
for any $S'\subsetneq S$. Since $P\in \mathbb {P}^d(\mathbb {R})$, the latter condition
is equivalent to $P\notin \langle S'\rangle$ for any $S'\subsetneq S$. Assume
$Csr (P)\le m$ and take $B$ evincing $Csr (P)$. Taking $r:= d$ and $A:= S$ in Lemma \ref{e1} we get
a contradiction. Hence $Csr (P)=m+1$.
Since $P\in \langle S\rangle _{\mathbb {R}}$ and $S\subset C_d(\mathbb {R})$,
we have $Rsr (P)\le \sharp (S)$. Hence $Rsr (P)=m+1$. 

\quad (b) Now we prove that
$m+2$ is a typical rank. Fix any finite set $W\subset C_d(\mathbb {C})$ such that
$\sigma (W)=W$, $\sharp (W)=m+1$ and at least one point of $W$ is not real. Fix
any $S\subset C_d(\mathbb {R})$ such that $\sharp (S)=m+2$ and $W\cap S=\emptyset$.
Since $\sharp (W\cup S) =d+2$ and $C_d(\mathbb {C})$ is a rational normal curve, $W\cup S$ spans $\mathbb {P}^d(\mathbb {R})$ and $\mathbb {P}^d((\mathbb {C})$. Since
$\dim (\langle W))=\sharp (W)-1$ and $\dim (\langle S \rangle ) =\sharp (S)$,
Grassmann's formula gives that $\langle W\rangle \cap \langle S\rangle$ is a point, $Q$.
We claim that $Rsr (Q)=m+2$. Fix any $W'\subset W$ such that $\sharp (W')=m$ (we do not assume
$\sigma (W')=W'$). Since $\sharp (W'\cup S)=d+1$ and $C_d(\mathbb {C})$ is a rational normal curve,
$W'\cup S$ spans $\mathbb {P}^d(\mathbb {C})$. Hence Grassmann's formula gives
$\langle W'\rangle \cap \langle S\rangle =\emptyset$. Hence $Q\notin \langle W'\rangle$
for any $W'\subsetneq W$. Hence $Csr (Q)=m+1$ and $W$ is the unique subset of
$C_d(\mathbb {C})$ evincing $Csr (Q)$ (\cite{ik}, Theorem 1.40; alternatively, apply Lemma \ref{e1} as in step (a)). Hence $Rsr (Q)>m+1$.
Since $P\in \langle S\rangle _{\mathbb {R}}$ and $\sharp (S)=m+2$, we get
$Rsr (Q)=m+2$. We need to check that, varying $W$ and $S$ the points $\langle W\rangle \cap
\langle S\rangle$ cover a non-empty open subset of $\mathbb {P}^d(\mathbb {R})$ for
the euclidean topology.

For any $W$ as above and any $O\in \langle W\rangle _{\mathbb {R}}\setminus
(\cup _{W'\subsetneq W}\langle W'\rangle )$ let $\mathcal {R}(O)$
denote the set of all $S\subset C_d(\mathbb {R})$ such that $S\cap W =\emptyset$, $\sharp (S) =m+2$ and $O\in \langle S\rangle$. As in step (a) we see that $Crs (O)=m+1$. Obviously $\cup _{W}( \langle W\rangle _{\mathbb {R}}\setminus
(\cup _{W'\subsetneq W}\langle W'\rangle )$ covers a  non-empty open subset of $\mathbb {P}^d(\mathbb {R})$ for
the euclidean topology. The set of all $S\in \mathcal {R}(O)$ depends from $m+2$ parameters,
the set of all pairs $\{(S,P)\}$, $S\in \mathcal {R}(O)$, $P\in \langle S\rangle _{\mathbb {R}}$ depends from $2m+3$
parameters. Hence is sufficient to prove that for any $W$ are above
and any $O\in \langle W\rangle )_{\mathbb {R}}\setminus
(\cup _{W'\subsetneq W}\langle W'\rangle )$ the set $\mathcal {R}(O)$ has at most dimension $2$.
Assume that this is not true, i.e. assume $\dim (\mathcal {R}(O)) \ge 3$. Hence $\dim (\mathcal {C}(O)) \ge 3$, where $\mathcal {C}(O)$
denote the set of all $S\subset C_d(\mathbb {C})$ such that $S\cap W =\emptyset$, $\sharp (S) =m+2$ and $O\in \langle S\rangle$. Fix a  general $(O_1,O_2)\in C_d(\mathbb {C})\times C_d(\mathbb {C})$.
By assumption there are at least two distinct $S_1, S_2\in \mathcal {C}(O)$ containing
$\{O_1,O_2\}$. Set $B_i:= S_i\setminus \{O_1,O_2\}$. Set $L:= \langle \{O_1,O_2\}\rangle$.
Let $\ell _L: \mathbb {P}^d(\mathbb {C})\setminus L \to \mathbb {P}^{d-2}(\mathbb {C})$ denote
the linear projection from the line $L$. Set $A_i:= \ell _L(B_i)$, $i=1,2$. Since $\ell _L \vert
C_d(\mathbb {C})\setminus \{O_1,O_2\}$ is injective, we have $A_1\ne A_2$. Since
$O\in \langle S_i\rangle$ and $O\notin \langle S'_i\rangle$ for
any $S'_i\subsetneq S_i$, we have $\ell _L(O)\in \langle A_i\rangle $ and $\ell _L(O)\notin \langle
A'_i\rangle$ for any $A'\subsetneq A_i$. Since $A_1\ne A_2$ and $\sharp (A_i) \le m$,
Lemma \ref{e1} gives a contradiction.\qed

\vspace{0.3cm}

\qquad {\emph {Proof of Theorem \ref{i7}.}}

Fix $f\in R[x,y]_{d-1}$ with $d-1$ distinct roots and write $f = \sum _{i=1}^{d-1} c_iL_i^{d-1}$
with $L_i\in \mathbb {R}[x,y]_1$, $c_i\in \mathbb {R}$, $c_i\ne 0$, and $L_i$, $L_j$
pairwise non-proportional for all $i\ne j$ (\cite{co}, Proposition 3.1, and \cite{cr}, Corollary 1). Set $g_{f,c, R}:= \sum _{i=1}^{d} c_iL_i^d + cR^d$, with $c \in \mathbb {R}$,
$R\in \mathbb {R}[x,y]_1$
and $R\ne 0$. Let $P_{f,c ,R}\in \mathbb {P}^d(\mathbb {R})$ be the point corresponding to the polynomial $g_{f,c, R}$. Let $O\in \mathbb {P}^1(\mathbb {R})$ be the point
associated to $R$. Set $Q:= \nu _d(O)\in C_d(\mathbb {R})$. Let $\ell _Q: \mathbb {P}^d(\mathbb {R})\setminus \{Q\} \to \mathbb {P}^{d-1}(\mathbb {R})$ denote
the linear projection from $Q$. We have $Rsr (\ell _Q(P_{f,c ,R}))
= Rsr (f) =d-1$. Assume $Rsr (P_{f,c,r}) \le d-2$ and take $S_1\subset C_d(\mathbb {R})$
evincing $Rsr (P_{f,c,r})$. If $Q\notin S_1$, then we get $\ell _Q(P_{f,c ,R}) \in \langle \ell _Q(S_1)\rangle _{\mathbb {R}}$ and hence $Rsr (\ell _Q(P_{f,c ,R})) =\sharp (S_1)\le  d-2$, a contradiction. If $Q\in S_1$, then we
get $\ell _Q(P_{f,c ,R}) \in \langle \ell _Q(S_1\setminus \{Q\})\rangle _{\mathbb {R}}$ and hence $Rsr (\ell _Q((P_{f,c ,R}) \le d-3$, a contradiction. Hence $Rsr (P_{f,c,R}) \ge d-1$.

\quad (a) In this step we check that varying $f$ and $c$ the points $P_{f,c,R}$ covers a non-empty open subset
of $\mathbb {P}^{d}(\mathbb {R})$ for the euclidean topology. We fix $R$ and hence $O$ and $Q =\nu _d(O)$ and take all real polynomials $f$ as above with the additional
condition that they have a representative $\sum _{i=1}^{d-1} c_iL_i^{d-1}$ with
no $L_i$ proportional to $R$. We cover in this way a non-empty open subset $U$ of $\mathbb {P}^{d-1}(\mathbb {R})$. We identify $\mathbb {P}^{d-1}(\mathbb {R})$ with a hyperplane $M(\mathbb {R})$ of $\mathbb {P}^{d}(\mathbb {R})$ not containing $Q$ and call $M(\mathbb {C})\subset \mathbb {P}^d(\mathbb {C})$ the corresponding complex hyperplane. We see $\ell _Q$ as a submersion
$\ell _Q: \mathbb {P}^d(\mathbb {C}) \setminus \{Q\} \to M(\mathbb {C})\subset \mathbb {P}^d(\mathbb {C})$. Since
its fibers are the lines through $Q$ (minus the point $\{Q\}$), $\ell _Q^{-1}(U)$ is a non-empty
euclidean open subset of $\mathbb {P}^{d}(\mathbb {R})$. Moreover, we may take a very
large open set, i.e. $\ell _Q^{-1}(U)$
as this open subset.

\quad (b) In this step we prove the existence of a non-empty open subset $V$ of $\mathbb {P}^{d}(\mathbb {R})$ corresponding to points $P_{f,c,R}$ associated to polynomials $g_{f,c,R}$ with distinct roots, not all of them real. 
Take the set-up of step (a). Fix $f\in U$. Notice that the set of all $g\in \mathbb {C}[x,y]_d$ with at least one multiple root
is an an algebraic subvariety $\Sigma$ of dimension $\le d-1$ and that the closure $\overline{\Sigma}$ of $\ell _Q(\Sigma \setminus \{O\})$ is a proper subvariety of $M(\mathbb {C})$. Hence
it is not dense in $M(\mathbb {C})$ for the euclidean topology. Hence there is a non-empty open subset $U_1$ of $U$ such that $\ell _Q^{-1}(U_1)\cap \Sigma =\emptyset$. Fix
$f\in U_1$. By construction for each $c\ne 0$ the polynomial $g_{f,c,R}$ has no multiple root.
To conclude the proof it is sufficient to prove the existence of a non-empty interval $J \subset \mathbb {R}$ such
that $g_{f,c,R}$ has not $d$ real roots for $c\in J$.
Up to a projective change of coordinates we may assume $R =y$. Write
$\sum _{i=1}^{d-1} c_iL_i^d = \sum _{i=0}^{d} a_ix^iy^{d-i}$. Set $y=1$, $u(x):= \sum _{i=0}^{d}a_ix^i$ and $u_c(x) :=u(x) + c^d$. By construction $u_c(x)$ has degree $d$ and $d$ distinct roots. Fix a real numer $T>0$ such that $u(x)$ is monotone for $x\le -T$ and $x\ge T$.
Let $\eta$ be the maximum of $\vert u(x)\vert$ in the interval $[-T,T]$. If $\vert c\vert \gg 0$, say $\vert c^d\vert > \eta$, then $u_c$ has at most $2$ real roots.\qed

\providecommand{\bysame}{\leavevmode\hbox to3em{\hrulefill}\thinspace}

\end{document}